\documentclass[11]{siamltex}

\usepackage{datetime}
\settimeformat{ampmtime}
\usdate

\newcommand{\nwc}{\newcommand}
\nwc{\draftdate}{\today}
\usepackage[dvipsnames]{xcolor}
\usepackage[top=1in, bottom=1in, left=1in, right=1in]{geometry}
\usepackage{subcaption}
\usepackage{xcolor}
\renewcommand{\textcolor}[2]{#2}
\usepackage[pdftex]{graphicx}
\usepackage{psfrag}
\usepackage{wrapfig}
\usepackage{epsf}
\usepackage{amsmath,amssymb}
\usepackage{scalerel,amssymb}
\allowdisplaybreaks
\usepackage{url}
\usepackage{paralist}
\usepackage{comment}
\usepackage{chngcntr}
\usepackage{booktabs}
\counterwithout{figure}{section}
\counterwithout{table}{section}
\renewcommand{\theenumi}{(\roman{enumi})}

\newcommand{\new}[1]{\textcolor{blue}{#1}}

\newcommand{\barint}{\hbox{$\int$\kern-0.75\intwidth
\vrule width 0.5\intwidth height 2.4pt depth -2pt\kern0.25\intwidth}}
\newlength\intwidth
\setbox0=\hbox{$\int$}
\intwidth=\wd0

\newcommand\avint{\hbox{\hbox{$\displaystyle \int$}\hbox{\kern-.9em{$-$}}}}
\newcommand\smavint{\hbox{\hbox{$\int$}\hbox{\kern-.75em{$-$}}}}

\nwc{\st}{^{\mbox{\it st}}}
\nwc{\qref}[1]{(\ref{#1})}
\nwc{\veloc}{v}
\nwc{\rhoc}{\beta}
\nwc{\hl}{\hat{L}}

\def\Xint#1{\mathchoice
{\XXint\displaystyle\textstyle{#1}}%
{\XXint\textstyle\scriptstyle{#1}}%
{\XXint\scriptstyle\scriptscriptstyle{#1}}%
{\XXint\scriptscriptstyle\scriptscriptstyle{#1}}%
\!\int}
\def\XXint#1#2#3{{\setbox0=\hbox{$#1{#2#3}{\int}$}
\vcenter{\hbox{$#2#3$}}\kern-.51\wd0}}

\def\dashint{\Xint-}

\nwc{\intRp}{\int_0^\infty}
\nwc{\aint}{\dashint}
\nwc{\aaint}{\dashint}

\newcommand{\tE}{{\tilde E}}
\newcommand{\tW}{{\tilde W}}
\newcommand{\tWone}{{\tilde W_1}}

\newcommand{\R}{\mathbb R}

\newcommand{\be}{\begin{eqnarray}}
\newcommand{\ee}{\end{eqnarray}}
\newcommand{\nn}{\nonumber}
\newcommand{\ben}{\begin{eqnarray*}}
\newcommand{\een}{\end{eqnarray*}}

\begin{document}

\title{
Minimizers for the Cahn--Hilliard Energy Functional with the \new{Flory--Huggins} Potential
under Strong Anchoring Conditions
}

\author{
Shibin Dai
\thanks{
Department of Mathematics, University of Alabama, Tuscaloosa, AL 35487-0350, USA.
 Email: sdai4@ua.edu.
}
\and
Abba Ramadan
\thanks{
Department of Mathematics, University of Alabama, Tuscaloosa, AL 35487-0350, USA.
Email: aramadan@ua.edu
}
\and
Natasha Sharma
\thanks{Department of Mathematical Sciences, University of Texas El Paso, El Paso, TX 79968, USA.
Email: nssharma@utep.edu
}
}

\maketitle

\begin{abstract}
In this paper, we theoretically and numerically study the minimizers for the Cahn--Hilliard energy
with the \new{Flory--Huggins} potential under the strong anchoring condition\new{,} i.e., the
Dirichlet boundary condition. We reveal bifurcation phenomena mediated by the boundary condition,
the \new{transition layer thickness}, and the temperature of the system. Numerical simulations are
also presented to approximate the minimizers of this energy by solving a gradient-flow
equation\new{,} namely the Allen--Cahn equation constrained with strong anchoring conditions and
random initial data. The \new{effects of varying the transition layer thickness and temperature} are
presented to confirm the theoretical analysis.
\end{abstract}


\section{Introduction}
\label{s:Introduction}

The Cahn--Hilliard functional is a classical model for the free energy of a binary mixture, and it
has been explored heavily for many other interests. In particular, it is the cornerstone for the
diffuse-interface model to characterize the free energy of a system undergoing phase separation
\cite{CH1, CH2}. For a system occupying a bounded domain $\Omega\subset\R^n$ with a $C^1$ boundary,
the functional can be written as
\begin{align}\label{def-E}
	E(u) = \int_\Omega \left(  \frac{\kappa}{2} |\nabla u|^2 + W(u)\right)dx.
\end{align}
Here, $W$ is a double-well potential, with two minimizers corresponding to the two phases of the
system, and $\kappa$ is a parameter measuring the thickness of the transition layer between the two
phases.

The Cahn--Hilliard functional, along with the Cahn--Hilliard and Allen--Cahn equations, provides a
central framework for modeling key phenomena in two-phase materials, including phase separation,
coarsening, and pattern formation. Since such processes are studied on a domain $\Omega$, the role
of the boundary $\partial\Omega$ is equally significant. A commonly imposed condition is the Neumann
boundary condition $\partial_n u=0$, where $n$ is the outward unit normal to $\partial\Omega$
\cite{BCS,BN,BN1,CEC,EG,GNS,LM,P,WKG}. Other boundary conditions are also frequently considered,
such as periodic conditions, especially in computational studies
\cite{CR,CPW,CS,DD,DD1,FYHLDC,GCBH,H,HLT,Y}.

Dai, Li, and Luong \cite{DLL} recently studied minimizers of the Cahn--Hilliard functional with
quartic double-well potential under Dirichlet (strong anchoring) conditions, where $u$ is fixed to
a prescribed boundary function $g$ pointwise on $\partial\Omega$. This represents the most rigid
anchoring and is of considerable physical relevance \cite{DN,LO}. A weaker anchoring can instead be
formulated by requiring $u$ to approximate $g$ within a tolerance in a suitable norm such as $L^2$.
Du and Nicolaides \cite{DN} introduced the Dirichlet boundary condition for studies of the
Cahn--Hilliard functional in a finite element scheme for the one-dimensional Cahn--Hilliard equation,
while Bronsard and Hilhorst \cite{BH1} analyzed its asymptotic effects via energy methods. Further
work in this direction can be found in \cite{BH2,GL,LJSK}.

Although a quartic potential is theoretically easier and numerically convenient, a physically more
realistic choice is the \new{Flory--Huggins} potential \cite{A,AW,CMZ,DD2,M,MZ}, also commonly
referred to as the logarithmic potential,
\begin{align} \label{def-W}
	W(u) = \left\{
	\begin{array}{ll}
	\dfrac{\theta}{2}\left( (1-u)\ln (1-u) + (1+u)\ln (1+u) \right) + \dfrac{1}{2} (1-u^2)
	& \mbox{if } |u|\leq 1, \\[6pt]
	+\infty & \mbox{otherwise.}
	\end{array}
	\right.
\end{align}
\new{The logarithmic terms describe the entropy of mixing of the binary system \cite{MZ}, and the
singularity of $W$ at $u = \pm 1$ enforces the physical constraint $|u|<1$, ensuring that the order
parameter remains within the physically admissible range of concentrations. In contrast, the quartic
approximation is reasonable only when the quench is shallow, i.e., when the temperature $\theta$ is
close to the critical temperature \cite{MZ}.} Here $\theta>0$ is the rescaled temperature of the
system. If $\theta\geq 1$\new{,} then $W$ is strictly convex and has only one minimizer at $u=0$\new{,}
and hence $E$ is a convex functional. If $0<\theta <1$\new{,} then $u=0$ is a local maximizer for
$W$ and there are two minimizers $\pm u_\theta$\new{,} where $u_\theta$ satisfies $0<u_\theta<1$
and $W(u_\theta)= W(-u_\theta)$. We only consider $0<\theta<1$. Also, see \cite{BB,CEC} for works
that focused on homogeneous Neumann or periodic boundary conditions, and for the numerical scheme
with dynamic boundary conditions see \cite{GWWZ}.

In \cite{DLL}, Dai, Li, and Luong found that, for the quartic double-well potential, when the
boundary condition is a homogeneous mixture of the two phases, there is a bifurcation \new{phenomenon}
mediated by $\kappa$, the thickness of the transition layer. To be more precise, when $\kappa$ is
smaller than a critical value which is proportional to the inverse of the smallest eigenvalue of
the negative Laplacian, there will be two \new{nontrivial} symmetric minimizers for $E$. In contrast,
when the boundary value is larger (or smaller) than the homogeneous mixture, the symmetry of the
system is broken and there is only one minimizer. Their approach involves explicit calculations that
only work for the quartic potential, and it was left as an open problem \new{whether bifurcation
results of this kind hold for} double-well potentials of other forms.

In \cite{Dai-Ramadan:minimizers}, Dai and Ramadan studied the minimizers of the
\new{de~Gennes--Cahn--Hilliard} energy under strong anchoring conditions. They found that, after a
nonlinear transformation, the \new{de~Gennes--Cahn--Hilliard} energy can be transformed into a
Cahn--Hilliard energy with a potential that is a trigonometric function. \new{They} then took
advantage of the additional structure provided by the trigonometric function to derive bifurcation
results mediated by the transition layer thickness $\kappa$. It was still left open \new{whether
such results extend to} double-well potentials of other forms, or whether \new{a} unified approach
to generic double-well potentials \new{is possible}.

\new{Our goal in this paper is to resolve this open problem for the physically relevant
Flory--Huggins potential. Specifically, we study} the minimizers of the Cahn--Hilliard energy $E$
\qref{def-E} with the \new{Flory--Huggins} potential \qref{def-W} for $0<\theta<1$, under the
homogeneous strong anchoring condition, i.e., the Dirichlet boundary condition $u=0$ on
$\partial\Omega$. \new{In contrast to the quartic and trigonometric settings, the logarithmic
singularity of the Flory--Huggins potential requires substantially different techniques. Moreover,
the rescaled temperature $\theta$ enters as an additional bifurcation parameter alongside the
transition layer thickness $\kappa$, giving rise to a richer bifurcation structure.} Our main
result is the following.

\begin{theorem} \label{th-main}
Let $\lambda_1$ be the smallest eigenvalue of the negative Laplace operator in $\Omega$.
\begin{enumerate}
	\item If $\kappa \geq (1-\theta)/\lambda_1$, then $E$ has a unique minimizer $u^*\equiv 0$
	and $E(u^*) = \frac{|\Omega|}{2}$.
	\item If $0<\kappa < (1-\theta)/\lambda_1$, then $E$ has two nontrivial minimizers $u^*_+$
	and $u^*_-$, with $u^*_- = - u^*_+$, and $0<u^*_+<u_\theta$ in $\Omega$,
	and $E(u^*)<\frac{|\Omega|}{2}$.
\end{enumerate}
\end{theorem}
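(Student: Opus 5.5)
The plan is to work in the admissible class $\mathcal A=\{u\in H^1_0(\Omega): |u|\le 1 \text{ a.e.}\}$; any $u$ of finite energy lies in $\mathcal A$. First I would establish existence of a minimizer by the direct method. $W$ is continuous and bounded on $[-1,1]$, so $E$ is bounded below and coercive in $H^1_0$. A minimizing sequence converges weakly in $H^1_0$ and strongly in $L^2$. The constraint $|u|\le1$ is preserved, and lower semicontinuity follows from weak l.s.c. of the Dirichlet term together with dominated convergence for $W(u_k)$. Since $E(|u|)=E(u)$ and $W$ is even, we may take a minimizer $u^*\ge0$; then $-u^*$ is also a minimizer. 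The value $E(0)=|\Omega|/2$ is immediate because $W(0)=1/2$.

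Next I would use the splitting $W(u)=\tfrac12+\tfrac12(1-\theta)\cdot 0 + F(u)-\tfrac{1-\theta}{2}u^2$, where
\[ F(u)=\tfrac{\theta}{2}\big((1-u)\ln(1-u)+(1+u)\ln(1+u)\big)-\tfrac{\theta}{2}u^2 . \]
Since $\ln(1\pm u)$ sums give $F(u)=\tfrac\theta2\sum_{k\ge2}\frac{u^{2k}}{k(2k-1)}\ge0$, with equality only at $u=0$, we have $W(u)-\tfrac12 = F(u)-\tfrac{1-\theta}{2}u^2$. Hence
\[ E(u)-\tfrac{|\Omega|}{2}=\tfrac12\int_\Omega\big(\kappa|\nabla u|^2-(1-\theta)u^2\big)dx+\int_\Omega F(u)\,dx . \]
For part (i), Poincaré gives $\kappa\int|\nabla u|^2\ge\kappa\lambda_1\int u^2\ge(1-\theta)\int u^2$, so both terms are nonnegative. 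Equality forces $\int F(u)=0$, hence $u\equiv0$, which gives uniqueness. For the energy part of (ii), I would take $u=\varepsilon\varphi_1$ with $\varphi_1$ the first eigenfunction. The quadratic term is $-c\varepsilon^2$ with $c>0$, while $\int F=O(\varepsilon^4)$, so $E<|\Omega|/2$ for small $\varepsilon$. Thus minimizers are nontrivial.

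The remaining work in (ii) is the qualitative claims: strict bounds $0<u^*_+<u_\theta$, and that there are exactly two minimizers. This is the main obstacle. For the upper bound, I would use truncation: $\min(u,u_\theta)$ lowers the energy, since $W$ is increasing on $(u_\theta,1)$ and $W(u)>W(u_\theta)$ there. This gives $u^*\le u_\theta$, and in particular $u^*$ stays away from $\pm1$. Then $u^*$ satisfies the Euler--Lagrange equation $-\kappa\Delta u=-W'(u)$ classically in $\Omega$. Strict inequalities then follow from the strong maximum principle. For $u^*>0$, note $-\kappa\Delta u+ c(x)u=0$ with bounded $c=W'(u)/u$, and $u\ge0$, $u\not\equiv0$. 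For $u^*<u_\theta$, apply the same argument to $w=u_\theta-u$, using $W'(u_\theta)=0$.

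For uniqueness of the positive minimizer, I would try the hidden-convexity trick, as in the Brezis--Oswald argument. Positive critical points solve $-\kappa\Delta u = f(u)$ with $f(u)=-W'(u)=u-\tfrac\theta2\ln\frac{1+u}{1-u}$. The key property is that $f(u)/u$ is strictly decreasing on $(0,1)$; this holds because $\operatorname{artanh}(u)/u$ is increasing. Brezis--Oswald then gives at most one positive solution, hence a unique positive minimizer $u^*_+$. Any minimizer $u$ has $|u|$ also a minimizer, so $|u|=u^*_+>0$. Continuity of $u$ then forces $u=\pm u^*_+$. If the Brezis--Oswald approach needs extra justification near the boundary, I would instead use the substitution $v=u^2$ and convexity of $\int|\nabla\sqrt v|^2$ along $v_t=(1-t)u_1^2+tu_2^2$. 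I would combine this with strict convexity of $u\mapsto W(\sqrt{u})$ in the appropriate sense, which is again equivalent to the monotonicity of $f(u)/u$.
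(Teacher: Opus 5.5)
Your proof is correct, and it is more direct than the paper's. The paper never minimizes over the constrained class. It extends $W$ beyond $\hat u_\theta$ by a polynomial $\tilde W$ and works with the fibering maps $\Phi_u(s)=E(su)$ and $\tilde\Phi_\phi(s)=\tilde E(s\phi)$, a Nehari-manifold viewpoint. It gets (i) from $\Phi_u''>0$ for $s>0$ together with $\Phi_u'(0)=0$. It gets $\inf\tilde E<|\Omega|/2$ from a sign change of $\tilde\Phi_\phi'$ along the first eigenfunction. It then transfers minimizers back to $E$ via the cutoff to $[-u_\theta,u_\theta]$, and cites Lemma~5.1 of Dai--Ramadan for uniqueness.

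You differ at three points:
\begin{itemize}
\item You note that $W$ is finite and continuous on $[-1,1]$. So the direct method runs on $\{|u|\le 1\}$, and the restriction $s<1/u_\theta$ that motivates $\tilde W$ never arises.
\item Your identity $E(u)-|\Omega|/2=\frac12\int_\Omega(\kappa|\nabla u|^2-(1-\theta)u^2)\,dx+\int_\Omega F(u)\,dx$ is the integrated form of the paper's $\Phi_u''$ computation. It yields (i) and the energy drop at $\varepsilon\varphi_1$ at once.
\item The same series shows $W(\sqrt v)=\frac12-\frac{1-\theta}{2}v+\frac\theta2\sum_{m\ge2}\frac{v^m}{m(2m-1)}$ is strictly convex. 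So hidden convexity in $v=u^2$ gives a self-contained uniqueness proof, where the paper relies on an outside reference.
\end{itemize}
The paper's framework mainly buys a template it hopes to reuse for other boundary conditions. Yours buys brevity and completeness.

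Your handling of the strict bound $u_+^*<u_\theta$ is also the right one. For $w=u_\theta-u_+^*\ge 0$ you write $-\kappa\Delta w+c(x)w=0$, with the bounded (indeed nonnegative) coefficient $c=(W'(u_+^*)-W'(u_\theta))/(u_+^*-u_\theta)$. This makes the strong maximum principle applicable at an interior zero of $w$. The paper only records that $u_+^*-u_\theta\le 0$ is superharmonic with an interior zero. Superharmonicity excludes interior minima, not interior maxima, so that step in the paper actually needs your linearization.
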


The rest of the paper is structured as follows. Sections~2--5 are devoted to the proof of
Theorem~\ref{th-main}, each section handling a different aspect of the \new{problem}. In Section~2
we discuss approximations and modifications of the \new{Flory--Huggins} potential. In Sections~3
and~4 we study minimizers for the Cahn--Hilliard energy with the \new{Flory--Huggins} potential and
its approximation using \new{the} Nehari manifold. In Section~5 we study properties of minimizers.
In Section~6 we present numerical simulation results. Finally\new{,} in Section~7 we discuss
\new{challenges and} topics for further exploration.

\section{Approximations and modifications}
\new{Having stated the main result, we now turn to its proof, which occupies Sections~2--5.
The key difficulty is the logarithmic singularity of the Flory--Huggins potential, which
prevents direct variational arguments. In this section we construct a smooth global
approximation $\tW$ of $W$ that coincides with $W$ on the physically relevant region
$|u|\leq u_\theta$ and shares the same minimizers.}

The first technical difficulty we encounter is the singularity of $W$ at $u=\pm 1$. \new{To
address this, we separate the singular logarithmic terms from the smooth term. Specifically, $W$
can be written as the difference of two convex functions,}
\begin{align}\new{\label{W-decomp}}
	\new{W(u) = W_1(u) - W_2(u),}
\end{align}
\new{where}
\begin{align} \label{def-W1}
	W_1(u) = \left\{
	\begin{array}{ll}
	\dfrac{\theta}{2}\left( (1-u)\ln (1-u) + (1+u)\ln (1+u) \right) & \mbox{if } |u|\leq 1, \\[6pt]
	+\infty & \mbox{otherwise,}
	\end{array}
	\right.
\end{align}
and $W_2(u) = \frac{1}{2}(u^2 -1)$. \new{Note that $-W_2(u) = \frac{1}{2}(1-u^2)$, which is
consistent with the definition \qref{def-W}.} For $-1<u<1$, we have $W'(u) = W_1'(u) - u$ and
$W''(u) = W_1''(u) - 1$\new{, where}
\begin{align}
	W_1'(u) & = \frac{\theta}{2} \left( - \ln (1-u) + \ln (1+u) \right)
	= \theta \left( u + \frac{1}{3} u^3 +\frac{1}{5} u^5+\cdots \right), \label{W1'-taylor}\\
	W_1''(u) &= \frac{\theta}{1-u^2}  = \theta(1+ u^2 + u^4 + u^6+\cdots). \label{W1''-taylor}
\end{align}

Since $W_1'(u) \to \infty$ and $W_1''(u)\to \infty$ as $u\to 1^-$, there exists
$\hat{u}_\theta \in (u_\theta, 1)$ such that $W_1'(u) > 100$ and $W_1''(u) >100$ for all
$\hat{u}_\theta \leq u< 1$. By the Taylor expansions \qref{W1'-taylor} and \qref{W1''-taylor},
there exists $k$ such that
\begin{align}
	 \theta \left( u + \frac{1}{3} u^3 +\frac{1}{5} u^5+\cdots + \frac{1}{2k+1}u^{2k+1} \right) > 100, \quad
	 \theta(1+ u^2 + u^4 + \cdots + u^{2k}) > 100 \nn
\end{align}
for all $u \geq \hat{u}_\theta$. Define a new function
\begin{align}
	\tWone (u) :=\left\{
	\begin{array}{ll}
		 W_1(\hat{u}_\theta) + \displaystyle\int_{\hat{u}_\theta}^u
		 \theta \left( t + \frac{1}{3} t^3 + \cdots+ \frac{1}{2k+1}t^{2k+1}\right)dt
		 & \mbox{if } u>\hat{u}_\theta, \\[6pt]
		W_1(u) &  \mbox{if } |u|\leq \hat{u}_\theta, \\[6pt]
		W_1(-\hat{u}_\theta) + \displaystyle\int_{-\hat{u}_\theta}^u
		\theta \left( t + \frac{1}{3} t^3 + \cdots+ \frac{1}{2k+1}t^{2k+1}\right)dt
		& \mbox{if } u< -\hat{u}_\theta.
	\end{array}
	\right.
\end{align}
Then $\tWone$ is strictly convex in \new{each of the intervals} $(-\infty, -\hat{u}_\theta)$,
$(-\hat{u}_\theta, \hat{u}_\theta)$, and $(\hat{u}_\theta, \infty)$\new{.} Define
\begin{align}\new{\label{def-tW}}
	\tW(u) \new{:}= \tWone(u) - W_2(u).
\end{align}
Then $\tW(u)$ is defined for all $u\in\R$, \new{with} $\tW(u) = W(u)$ for all
$|u| \leq \hat{u}_\theta$ and $\tW(u) < W(u)$ for all $|u|> \hat{u}_\theta$. In addition, $\tW$
is strictly convex in $(-\infty, -\hat{u}_\theta)$ and $(\hat{u}_\theta, \infty)$, respectively.

\begin{lemma}
	$\tW(u)$ has \new{exactly} two minimizers $\pm u_\theta$, and
	$\tW(u_\theta) = \tW(-u_\theta)$.
\end{lemma}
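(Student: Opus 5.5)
By evenness and a region-by-region comparison, I will show that $\tW$ agrees with $W$ where the minimizers live and is strictly larger outside. First, $\tW$ is even. $W_1$ is even, and the two outer pieces of $\tWone$ are mirror images: the integrand $\theta(t+\frac13 t^3+\cdots)$ is odd, so its integral from $\mp\hat u_\theta$ is even. Since $W_2$ is even as well, $\tW(u)=\tW(-u)$. This gives $\tW(u_\theta)=\tW(-u_\theta)$ immediately, and reduces the count of minimizers to $u\ge 0$.

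On $[0,\hat u_\theta]$ we have $\tW=W$, so I will analyze $W$ there directly. From $W'(u)=W_1'(u)-u$ and $W''(u)=\theta/(1-u^2)-1$, the function $W''$ is strictly increasing on $[0,1)$. It is negative near $0$ because $\theta<1$, and it vanishes only at $u_c=\sqrt{1-\theta}$. Hence $W'$ is strictly decreasing on $[0,u_c]$ and strictly increasing on $[u_c,1)$. Together with $W'(0)=0$ and $W'(u)\to+\infty$ as $u\to1^-$, this shows $W'$ has exactly one zero in $(0,1)$, and that zero must be $u_\theta$. So $W$ decreases on $[0,u_\theta]$, increases on $[u_\theta,1)$, and $u_\theta$ is the unique minimizer of $W$ on $[0,1)$. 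Because $u_\theta<\hat u_\theta$, the same holds for $\tW$ on $[0,\hat u_\theta]$: the unique minimizer there is $u_\theta$, and $\tW(\hat u_\theta)>\tW(u_\theta)$.

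It remains to handle $u>\hat u_\theta$, and I expect this to be the main obstacle, since there $\tW<W$ and the cut-off could in principle create a new minimum. The fix is to show $\tW$ is increasing there. For $u>\hat u_\theta$,
\[
\tW'(u)=\theta\Big(u+\tfrac13u^3+\cdots+\tfrac1{2k+1}u^{2k+1}\Big)-u .
\]
The choice of $k$ gives $\theta(u+\cdots+\frac1{2k+1}u^{2k+1})>100$ for $u\ge\hat u_\theta$. This bound only secures $\tW'>0$ while $u<100$, so it is not enough for large $u$. Instead I will argue through convexity. The paper already notes that $\tW$ is strictly convex on $(\hat u_\theta,\infty)$, since $\theta(1+u^2+\cdots+u^{2k})>100>1$ there, so $\tW'$ is increasing on that interval. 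At the junction, the truncated polynomial derivative exceeds $100>\hat u_\theta$ at $u=\hat u_\theta^+$, so $\tW'(\hat u_\theta^+)>0$. Monotonicity then gives $\tW'>0$ on all of $(\hat u_\theta,\infty)$.

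Since $\tW$ is continuous at $\hat u_\theta$ (by construction $\tWone(\hat u_\theta)=W_1(\hat u_\theta)$), it follows that $\tW(u)>\tW(\hat u_\theta)>\tW(u_\theta)$ for every $u>\hat u_\theta$. Therefore $u_\theta$ is the unique minimizer of $\tW$ on $[0,\infty)$. By evenness, $\pm u_\theta$ are exactly the two minimizers of $\tW$ on $\R$, with equal values.
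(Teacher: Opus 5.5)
Your proposal is correct and follows the paper's route: evenness, $\tW=W$ on $[-\hat u_\theta,\hat u_\theta]$, and strict increase on $[\hat u_\theta,\infty)$ from a positive right derivative at $\hat u_\theta$ together with $\tW''>0$. Your version is a bit more careful than the paper's. You prove explicitly that $u_\theta$ is the unique minimizer of $W$ on $[0,\hat u_\theta]$, which the paper takes from the introduction. You also keep the $-1$ coming from $W_2''$ that the paper's display for $\tW''$ drops, and you handle the derivative jump at $\hat u_\theta$ using only continuity and the right derivative.
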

\begin{proof}
	Since $\tW$ is an even function and $\tW(u) = W(u)$ for all $|u| \leq \hat{u}_\theta$, we
	only need to \new{show} that $\tW$ is strictly increasing \new{on} $[\hat{u}_\theta, +\infty)$.
	To \new{see this, observe} that
	\[ \tW'(\hat{u}_\theta^+) = \tWone'(\hat{u}_\theta^+) - \hat{u}_\theta
	= \theta\left(1 + \frac{1}{3} \hat{u}_\theta^3 + \cdots
	+ \frac{1}{2k+1} \hat{u}_\theta^{2k+1}\right) - \hat{u}_\theta > 100 - 1 = 99>0,\]
	and for $u > \hat{u}_\theta$\new{,}
	\begin{align}
		\tW''(u)  = \theta ( 1+ u^2 + u^4 + \cdots + u^{2k})
		> \theta ( 1+ \hat{u}_\theta^2 + \hat{u}_\theta^4 + \cdots + \hat{u}_\theta^{2k})
		> 100 >0. \nn
	\end{align}
	\new{Consequently, for all $u > \hat{u}_\theta$,}
	\[\tW'(u) > \tW'(\hat{u}_\theta) > 99 > 0,\]
	and hence $\tW$ is strictly increasing \new{on} $[\hat{u}_\theta, +\infty)$.
\end{proof}

By a standard cutoff argument\new{,} all minimizers of $E$ satisfy
$-u_\theta \leq u(x) \leq u_\theta$ a.e.\new{ in} $\Omega$. Define the \new{modified} energy
functional
\begin{align} \label{def-tilde_E}
	\tE(u) := \int_\Omega \left( \frac{\kappa}{2} |\nabla u|^2 + \tW(u) \right) \,dx.
\end{align}
\new{By the same cutoff argument, all minimizers of $\tE$ also satisfy
$-u_\theta \leq u(x) \leq u_\theta$ a.e.\ in $\Omega$.} This means that $E$ and $\tE$ have
exactly the same minimizers over $H_0^1(\Omega)$. \new{Moreover,} $E(u) \geq \tE(u)$ for all
$u\in H_0^1(\Omega)$.

\section{Nehari manifold for $E(u)$}
\new{With the modified energy $\tE$ in hand, we now study the structure of critical points of
$E$ using the Nehari manifold. Recall that the first variation (Fr\'{e}chet derivative) of $E$
at $u$ in the direction $v\in H_0^1(\Omega)$ is defined by
\[
\langle \delta E(u), v \rangle := \int_\Omega \left( \kappa \nabla u \cdot \nabla v + W'(u) v \right) dx,
\]
so that $\langle \delta E(u), v\rangle = 0$ for all $v$ is the weak form of the Euler--Lagrange
equation \eqref{E-L}.}

\new{Define the Nehari manifold for the energy $E$ as}
\begin{align}\label{def-S}
	S:= \left\{ u\in H_0^1(\Omega): |u|<1 \mbox{ and }
	\langle \delta E(u), u\rangle =0 \right\}.
\end{align}
\new{We note that $S$ contains all critical points of $E$, since any critical point $u$ of $E$
satisfies $\langle \delta E(u), v\rangle = 0$ for all $v\in H_0^1(\Omega)$, and in particular for
$v=u$. Computing directly, and using integration by parts together with the boundary condition
$u=0$ on $\partial\Omega$ (so the boundary term $\kappa\oint_{\partial\Omega} u\,\partial_n u\,dS$
vanishes),}
\begin{align}\label{S-property1}
	\langle \delta E(u), u\rangle &= \int_\Omega \left(-\kappa\Delta u + W'(u) u \right)dx
	= \int_\Omega \left(\kappa|\nabla u|^2 + W'(u)u \right) dx \nn\\
	&=\int_\Omega \left( \kappa|\nabla u|^2 - u^2 \right)dx
	+ \frac{\theta}{2}\int_\Omega  u \ln \left(\frac{1+u}{1-u}\right)\,dx.
\end{align}
Let $\lambda_1$ be the smallest eigenvalue of the negative Laplace operator with Dirichlet boundary
condition. We easily obtain the following (non-sharp) result.

\begin{lemma} \label{lemma-non-sharp}
If $\kappa\geq 1/\lambda_1$, then $S=\{0\}$ and hence $u\equiv 0$ is the only minimizer for $E$
over $H_0^1(\Omega)$.
\end{lemma}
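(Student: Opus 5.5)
We will use the identity \qref{S-property1} together with the Poincaré inequality $\int_\Omega |\nabla u|^2\,dx \geq \lambda_1 \int_\Omega u^2\,dx$, valid for all $u\in H_0^1(\Omega)$. If $\kappa\geq 1/\lambda_1$, this gives
\[
\int_\Omega \left(\kappa|\nabla u|^2 - u^2\right)dx \geq (\kappa\lambda_1 - 1)\int_\Omega u^2\,dx \geq 0 .
\]
Next we look at the logarithmic term. For $|u|<1$, the function $u\ln\frac{1+u}{1-u}$ is nonnegative. It vanishes only at $u=0$, because $\ln\frac{1+u}{1-u}$ has the same sign as $u$. Indeed, by \qref{W1'-taylor} we have $u\ln\frac{1+u}{1-u} = 2u^2(1+\frac13 u^2+\cdots)\geq 2u^2$.

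Now let $u\in S$. Then $\langle\delta E(u),u\rangle=0$, and this quantity is the sum of the two nonnegative terms above. So both terms vanish. In particular $\frac{\theta}{2}\int_\Omega u\ln\frac{1+u}{1-u}\,dx=0$. Since the integrand is nonnegative and $\theta>0$, it is zero a.e., and therefore $u=0$ a.e. Conversely, $0\in S$ trivially. Hence $S=\{0\}$.

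In fact the sharper bound $u\ln\frac{1+u}{1-u}\geq 2u^2$ gives $\langle\delta E(u),u\rangle \geq (\kappa\lambda_1-1+\theta)\int_\Omega u^2\,dx$. This already hints at the sharp threshold $(1-\theta)/\lambda_1$ of Theorem~\ref{th-main}, but the lemma only needs nonnegativity.

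For the minimizer statement, we need existence of a minimizer of $E$ over $H_0^1(\Omega)$ and that it is a critical point. Existence follows by the direct method: $W$ is bounded below on $[-1,1]$, $E$ is coercive in $\int|\nabla u|^2$, and $E$ is weakly lower semicontinuous (compact embedding into $L^2$, Fatou's lemma for $W$, which is lower semicontinuous with values in $(-\infty,+\infty]$). The minimizer has finite energy, so $|u|\leq1$ a.e.

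The main obstacle is justifying that the minimizer satisfies the Euler--Lagrange equation, given the singularity of $W'$ at $\pm1$. To handle it, we use the modified energy $\tE$ from Section 2. Minimizers of $E$ coincide with minimizers of $\tE$ and satisfy $|u|\leq u_\theta<1$ a.e. The potential $\tW$ is smooth on the range of $u$, so variations $u+tv$ with $v\in H_0^1\cap L^\infty$ are admissible. Differentiating and then using density (since $W'(u)$ is bounded) gives $\langle\delta E(u),v\rangle=0$ for all $v\in H_0^1(\Omega)$. Thus $u\in S=\{0\}$, so $u\equiv0$ is the unique minimizer.
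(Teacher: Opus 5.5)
Your proof is correct and follows the paper's argument for $S=\{0\}$: the Poincar\'e inequality makes $\int_\Omega(\kappa|\nabla u|^2-u^2)\,dx\ge 0$, and $u\ln\frac{1+u}{1-u}\ge 0$, with equality only at $u=0$, forces $u\equiv 0$. Two of your additions go beyond the paper: your direct-method existence argument and your check that a minimizer (which satisfies $|u|\le u_\theta$ a.e.) satisfies the Euler--Lagrange equation, and hence lies in $S$, supply what the paper leaves implicit in its ``hence''; and your bound $u\ln\frac{1+u}{1-u}\ge 2u^2$, which is strict for $u\neq 0$, already gives $S=\{0\}$ whenever $\kappa\ge(1-\theta)/\lambda_1$.
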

\begin{proof}
For all $u\in H_0^1(\Omega)\setminus\{0\}$, we have $\int_\Omega |\nabla u|^2\,dx \geq \lambda_1
\int_\Omega u^2\,dx$, with equality if and only if $u$ is an eigenfunction corresponding to
$\lambda_1$. Hence, if $\kappa \geq 1/\lambda_1$\new{, then}
\begin{align}
	\int_\Omega ( \kappa |\nabla u|^2 - u^2) \, dx \geq 0\new{,}
\end{align}
with equality if and only if $\kappa = 1/\lambda_1$ and $u$ is an eigenfunction corresponding to
$\lambda_1$. Since $u\ln\left(\frac{1+u}{1-u} \right) \geq 0$ for all $-1<u<1$, with equality if
and only if $u=0$, we have
\begin{align}
	\int_\Omega u\ln\left(\frac{1+u}{1-u} \right) \,dx \geq 0\new{,}
\end{align}
with equality if and only if $u\equiv 0$. Combining \new{these estimates}, if $\kappa\geq 1/\lambda_1$,
then $\langle \delta E(u), u\rangle >0$ unless $u\equiv 0$\new{, i.e.,} $S=\{0\}$.
\end{proof}

For any $u\in H_0^1(\Omega)$ with $|u| \leq u_\theta$, define
\begin{align}
	\Phi_u(s) := E(su) \quad\mbox{ for all } s\in \R.
\end{align}
If $u$ is a minimizer for $E$, then $s=1$ is a minimizer for $\Phi_u$. Since $|u|\leq u_\theta<1$,
we need only consider $0\leq s< 1/u_\theta$. \new{Computing,}
\begin{align}
	\Phi'_u(s) & =  \int_\Omega \left\{s(\kappa |\nabla u|^2 - u^2)
	+  \frac{\theta u}{2} \ln \left( \frac{1+s u}{1-s u}\right)\right\} \,dx, \label{Phi'}\\
	\Phi''_u(s) & = \int_\Omega \left( \kappa|\nabla u|^2 - (1-\theta) u^2
	+ \theta s^2 u^4 (1+ s^2u^2 + s^4 u^4 + \cdots)  \right)\,dx. \label{Phi''}
\end{align}
Using the detailed information provided by \new{\qref{Phi''}}, we can \new{sharpen}
Lemma~\ref{lemma-non-sharp}.

\begin{lemma}\label{lemma-sharp1}
	If $\kappa \geq (1-\theta)/\lambda_1$, then $u\equiv 0$ is the only minimizer for $E$ over
	$H_0^1(\Omega)$ and
	\[\min \{ E(u): u\in H_0^1(\Omega)\} = \frac{|\Omega|}{2}.\]
\end{lemma}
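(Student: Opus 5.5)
We will use the fiber maps $\Phi_u(s)=E(su)$ together with the formula \qref{Phi''}. By the cutoff argument in Section~2, every minimizer $u$ of $E$ over $H_0^1(\Omega)$ satisfies $|u|\leq u_\theta<1$ a.e. Hence it suffices to show that for every nonzero $u\in H_0^1(\Omega)$ with $|u|\leq u_\theta$ we have $E(u)>E(0)$. Note that $E(0)=\int_\Omega W(0)\,dx=\frac{|\Omega|}{2}$, since $W(0)=\frac12$.

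Fix such a $u\neq 0$ and consider $\Phi_u$ on $[0,1]$. This interval lies inside $[0,1/u_\theta)$, so $|su|<1$ there and $\Phi_u$ is smooth.

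First, $\Phi_u'(0)=0$. This follows from \qref{Phi'}, since at $s=0$ the logarithm vanishes.

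Next, for $s\in(0,1]$ the quantity $\Phi_u''(s)$ is strictly positive. Indeed, by the Poincaré inequality $\int_\Omega|\nabla u|^2\,dx\geq\lambda_1\int_\Omega u^2\,dx$, the hypothesis $\kappa\geq(1-\theta)/\lambda_1$ gives
\[\int_\Omega\left(\kappa|\nabla u|^2-(1-\theta)u^2\right)dx\geq 0.\]
The remaining term in \qref{Phi''} is
\[\theta s^2\int_\Omega u^4(1+s^2u^2+\cdots)\,dx=\theta s^2\int_\Omega\frac{u^4}{1-s^2u^2}\,dx,\]
which is strictly positive for $s>0$ because $u\not\equiv 0$.

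Consequently $\Phi_u'$ is strictly increasing on $[0,1]$. Since $\Phi_u'(0)=0$, we get $\Phi_u'(s)>0$ for all $s\in(0,1]$. Therefore $E(u)=\Phi_u(1)>\Phi_u(0)=E(0)=\frac{|\Omega|}{2}$.

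Existence of a minimizer is not needed separately. The strict inequality shows that $\inf E$ over the truncated class $\{|u|\leq u_\theta\}$ is attained at $0$, and the cutoff argument (which shows that truncating at $\pm u_\theta$ does not increase the energy) then gives $\inf_{H_0^1}E=\frac{|\Omega|}{2}$ with $0$ as the unique minimizer. Any other minimizer would lie in the truncated class and be nonzero, which contradicts the strict inequality.

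The main point requiring care is the justification of \qref{Phi''}, namely differentiating under the integral sign. This is legitimate because $|su|\leq u_\theta<1$ uniformly on $[0,1]$, so the integrand and its $s$-derivatives are bounded by integrable functions: $|\nabla u|^2$, and $u^2$ times a constant depending only on $u_\theta$. The positivity of the quartic remainder is then exactly what upgrades the borderline case $\kappa=(1-\theta)/\lambda_1$, where the quadratic part may vanish for the first eigenfunction, to a strict inequality.
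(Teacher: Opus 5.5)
Your proof is correct and uses the same mechanism as the paper's. The fiber map $\Phi_u(s)=E(su)$ satisfies $\Phi_u'(0)=0$ and $\Phi_u''(s)>0$ for $s>0$, by the Poincar\'e inequality together with the strictly positive quartic remainder. The only difference is the final step: the paper concludes that $E$ has no nonzero critical point and hence $0$ ``must be the minimizer'' (tacitly assuming a minimizer exists and satisfies the Euler--Lagrange equation), whereas you compare energies directly via $E(u)=\Phi_u(1)>\Phi_u(0)=E(0)$ on the truncated class and then use the truncation, which makes your ending slightly more self-contained.
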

\begin{proof}
By \qref{Phi'}, we have $\Phi'_u(0) =0$. If $\kappa \geq (1-\theta)/\lambda_1$, then
\[\int_\Omega \left( \kappa|\nabla u|^2 - (1-\theta) u^2\right)dx \geq 0\]
for all $u\in H_0^1(\Omega)$. Then for all $u\in H_0^1(\Omega)\setminus \{0\}$ and all $s>0$ such
that $|su|<1$ a.e.\ in $\Omega$, we have $\Phi''_u(s)>0$ and $\Phi'_u(s) > \Phi'_u(0) =0$.
Hence $\Phi_u(s)$ has no positive critical point. Consequently\new{,} $E$ has no nonzero critical
point, and $u\equiv 0$ is the only critical point of $E$ and must be \new{the} minimizer.
\end{proof}

Our next step is to study whether there are nontrivial minimizers \new{when}
$\kappa < (1-\theta)/\lambda_1$. To do so\new{,} we determine whether $\Phi_u$ has a positive
minimizer $s\in (0, 1/u_\theta)$. By the Taylor expansion\new{,}
\begin{align}
	\Phi'_u(s)
	& = s \int_\Omega \left\{ \kappa |\nabla u|^2 - (1-\theta) u^2
	+ \theta s^2 u^4 \left( \frac{1}{3} + \frac{s^2u^2}{5} +\cdots\right) \right\} dx.
	\label{Phi'-2}
\end{align}
This \new{suggests} that $\Phi_u'(s)$ may \new{vanish} for \new{suitably chosen} $u$ and $s$.
However, since there is no obvious mechanism to guarantee such \new{$s$} to be less than
$1/u_\theta$, we \new{proceed by studying the Nehari manifold for $\tE$ instead, which carries no
such restriction}.

\section{Nehari manifold for $\tE(u)$}
\new{The analysis of Section~3 identified an obstruction: there is no obvious way to guarantee
that the critical point $s$ of $\Phi_u$ satisfies $s < 1/u_\theta$, which is needed for
$E(su)$ to be finite. We resolve this by working instead with the modified energy $\tE$
from Section~2, which is defined on all of $\R$ and has no such restriction. The Nehari
manifold for $\tE$ is defined as follows.}

\new{Define the Nehari manifold for the modified energy $\tE$ as}
\begin{align}\new{\label{def-tS}}
	\tilde{S}:= \{ u\in H_0^1(\Omega): \langle \delta \tE(u), u\rangle =0 \}.
\end{align}
For $u\in H_0^1(\Omega)$, define $\tilde{\Phi}_u(s) := \tE(su)$. Then
\begin{align}
	\tilde{\Phi}'_u(s) & = s \int_{|su|\leq \hat{u}_\theta} \left\{ \kappa |\nabla u|^2
	- (1-\theta) u^2 + \theta s^2 u^4 \left( \frac{1}{3} + \frac{(su)^2}{5}
	+\cdots\right) \right\} dx \nn\\
	& \quad + s \int_{|su| > \hat{u}_\theta} \left\{ \kappa |\nabla u|^2 - (1-\theta) u^2
	+ \theta s^2 u^4 \left( \frac{1}{3} + \frac{(su)^2}{5}
	+\cdots+ \frac{(su)^{2k-2}}{2k+1}\right) \right\} dx. \label{tilde-Phi'}
\end{align}
Let $\phi$ be an eigenfunction corresponding to $\lambda_1$, so that
$\int_\Omega |\nabla\phi|^2\,dx = \lambda_1 \int_\Omega \phi^2\,dx$. If
$\kappa < (1-\theta)/\lambda_1$, then
$\int_\Omega \left(\kappa |\nabla \phi|^2 - (1-\theta)|\phi|^2 \right)dx <0$. Hence by
\qref{tilde-Phi'}, there exists a small interval $(0,r)$ such that $\tilde{\Phi}_\phi'(s) <0$ for
all $s\in (0, r)$. On the other hand, by \qref{tilde-Phi'},
\begin{align}
	\tilde{\Phi}_\phi'(s) > s\int_\Omega \left(\kappa |\nabla\phi|^2 - (1-\theta)|\phi|^2
	+ \frac{\theta s^2 \phi^4}{3}\right)dx > 0
\end{align}
for any
$s > \left( - \dfrac{3\int_\Omega \left( \kappa |\nabla \phi|^2
- (1-\theta)\phi^2 \right)dx}{\theta\int_\Omega \phi^4\,dx}\right)^{1/2}.$
By the intermediate value theorem, there exists $s_\phi >0$ such that
$s_\phi=\sup\{ s>0: \tilde{\Phi}_\phi'(s)<0\}$. Then
\[ \tE(s_\phi \phi) = \tilde{\Phi}_\phi (s_\phi) < \tilde{\Phi}_\phi(0)
= \tE(0) = E(0)= \frac{|\Omega|}{2}.\]
Hence $\min\{ \tE(u): u\in H_0^1(\Omega)\} < \frac{|\Omega|}{2}$. As \new{noted above}, any
minimizer $u^*$ of $\tE$ must satisfy $u^* \in [-u_\theta, u_\theta]$ for all $x\in \Omega$.
Hence $u^*$ \new{is also} a minimizer for $E$\new{, since}
$E(u^*) = \tE(u^*)\leq \tE(u) \leq E(u)$ for all $u\in H_0^1(\Omega)$.

\section{Properties of the Minimizers}
\new{Section~4 established the existence of minimizers: when $\kappa < (1-\theta)/\lambda_1$,
the energy $E$ has at least one nontrivial minimizer $u^*$. In this section we analyze the
qualitative properties of these minimizers, showing in particular that they come in a symmetric
pair $\pm u^*_+$ and that $0 < u^*_+ < u_\theta$ pointwise in $\Omega$.}

Now that we have established the \new{existence of} minimizers for the energy \eqref{def-E}, we
\new{analyze their properties. In particular, we show that these minimizers are unique within each
sign class.}

We define the positive and negative minimizers as $u^*_+:=|u^*|$ and $u^*_-=-|u^*|$,
respectively. Clearly $u^*_+, u^*_-\in H_0^1(\Omega)$\new{; moreover,} $W(u^*_+)=W(u^*_-)$\new{,}
and hence $E(u^*_+)=E(u^*_-)$. By definition, $u^*_+\geq 0$\new{,} $u^*_-\leq 0$\new{,} and
$u^*_++u^*_-=0.$

The Euler--Lagrange equation satisfied by \new{$u_+^*$} is
\begin{equation}
    \label{E-L}
    -\kappa \Delta \new{u_+^*}+\frac{\theta}{2}\ln\left(\frac{1+\new{u_+^*}}{1-\new{u_+^*}}\right)-\new{u_+^*}=0 \quad \new{\text{in } \Omega.}
\end{equation}
\new{An analogous equation holds for $u_-^*$, with $u_+^*$ replaced by $u_-^*$.}
The regularity of $u_+^*$ \new{(and similarly $u_-^*$)} follows from the fact that for each
$\theta\new{\in(0,1)}$ there exist\new{s} $C_1,C_2$ depending only on $\theta$ such that
\[\frac{\theta}{2}\ln\left(\frac{1+\new{u_+^*}}{1-\new{u_+^*}}\right)\leq C_1 \new{u_+^*}+C_2\new{(u_+^*)}^3.\]
Thus, via the Sobolev embeddings and \new{a} bootstrap argument\new{,} we have $\new{u_+^*}\in
C^{\infty}(\Omega)\cap C^{0,\beta}(\bar{\Omega})$ with $\beta=\frac{1}{2}$ for $n=3$ and
$\beta\in (0,1)$ for $n=2.$

\subsection{Maximum \new{p}rinciple}

The positive minimizer $u_+^*$ satisfies \new{the boundary value problem}
\begin{equation}
\label{E-L-pos}
\left\{
\begin{aligned}
    -\kappa \Delta u_+^* + \frac{\theta}{2}\ln\left(\frac{1+u_+^*}{1-u_+^*}\right) - u_+^* &= 0
    && \textcolor{blue}{\text{in } \Omega,} \\
    u_+^* &= 0 && \textcolor{blue}{\text{on } \partial\Omega,} \\
    u_+^* &\geq 0 && \textcolor{blue}{\text{in } \Omega.}
\end{aligned}
\right.
\end{equation}

\begin{lemma}
The positive minimizer $u_+^*$ satisfying \eqref{E-L-pos} \new{satisfies} $u_+^*>0$ in
$\Omega.$
\end{lemma}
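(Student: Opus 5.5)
We will prove the lemma by the strong maximum principle, applied after rewriting \eqref{E-L-pos} as a linear equation with a bounded zeroth-order coefficient.

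\textbf{Step 1: linearize the equation.} Set $u:=u_+^*$. Define
\[
c(x):=\frac{1}{u(x)}\left(\frac{\theta}{2}\ln\frac{1+u(x)}{1-u(x)}-u(x)\right)\quad\text{where } u(x)>0,
\]
and $c(x):=\theta-1$ where $u(x)=0$. By the Taylor expansion \qref{W1'-taylor}, $c(x)=\theta\bigl(1+\tfrac13u^2+\tfrac15u^4+\cdots\bigr)-1$. We know from Section~4 that $0\le u\le u_\theta<1$. Hence $c$ is continuous and bounded:
\[
|c|\le \theta/(1-u_\theta^2)+1 .
\]
With this $c$, equation \eqref{E-L-pos} reads $-\kappa\Delta u+c(x)u=0$ in $\Omega$.

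\textbf{Step 2: remove the sign problem.} The coefficient $c$ may be negative near $u=0$, since $c(0)=\theta-1<0$. To fix this, pick a constant $M\ge \sup|c|$. Then
\[
-\kappa\Delta u+(c+M)u = Mu\ge 0,
\]
with $c+M\ge0$ and $u\ge0$. So $u$ is a nonnegative supersolution of an operator $L:=-\kappa\Delta+(c+M)$ whose zeroth-order coefficient is nonnegative. The regularity established above ($u\in C^\infty(\Omega)\cap C^{0,\beta}(\bar\Omega)$) makes this classical, so no weak-solution technicalities arise.

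\textbf{Step 3: apply the strong maximum principle.} We use the strong maximum principle for supersolutions with $c+M\ge0$, in the form of Gilbarg--Trudinger Theorem~3.5, or Hopf's version for nonnegative supersolutions. It says that if $u\ge0$ attains the value $0$ at an interior point of a connected domain, then $u\equiv0$ there. (The minimum value is $0\le0$, so the sign condition on the zeroth-order term is compatible.) On each connected component of $\Omega$ we therefore get either $u>0$ or $u\equiv0$. If $\Omega$ is not assumed connected, we work componentwise.

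\textbf{Step 4: rule out $u\equiv0$, the main obstacle.} Section~4 gives $E(u^*)<|\Omega|/2=E(0)$, so $u\not\equiv0$ globally. On a single component, however, $u\equiv 0$ must still be excluded. We handle this by minimality: if $u\equiv0$ on a component $\Omega_j$ with $\kappa<(1-\theta)/\lambda_1(\Omega_j)$, then replacing $u$ on $\Omega_j$ by $s_\phi\phi$ (the construction of Section~4 applied to $\Omega_j$) strictly lowers the energy, a contradiction. If instead $\kappa\ge(1-\theta)/\lambda_1(\Omega_j)$, then zero is forced there and positivity can only hold on components where this inequality fails.

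So for connected $\Omega$, which we assume as is implicit in Theorem~\ref{th-main}, this yields $u_+^*>0$ in $\Omega$. The delicate point is justifying that minimality forbids vanishing; in the connected case it reduces to $u^*\not\equiv0$, which follows directly from the strict energy inequality of Section~4.
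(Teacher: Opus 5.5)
Your proof is correct and is essentially the paper's argument. The paper likewise rewrites \eqref{E-L-pos} as $-\kappa\Delta u_+^*+c\,u_+^*\ge 0$ with a bounded nonnegative coefficient and invokes the strong maximum principle; its coefficient $c=\theta\bigl(\tfrac13 (u_+^*)^2+\tfrac15 (u_+^*)^4+\cdots\bigr)$ is exactly your $c+M$ with $M=1-\theta$, which suffices because your $c\ge\theta-1$. Your remarks that this coefficient is only $\ge 0$ (it vanishes where $u_+^*=0$, whereas the paper writes $>0$), and that connectedness of $\Omega$ is what lets $u^*\not\equiv 0$ yield positivity everywhere, are accurate refinements of the paper's brief proof.
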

\begin{proof}
    \new{Suppose for contradiction that} there exists $x_0 \in \Omega$ such that
    $u_+^*(x_0)=0.$ Since $u_+^*$ \new{satisfies} \eqref{E-L-pos}\new{,}
    \new{a} Taylor expansion of $\frac{\theta}{2}\ln\left(\frac{1+u_+^*}{1-u_+^*}\right)$ around
    $0$ \new{gives}
    \[-\kappa \Delta u_+^*+\frac{\theta}{3}(u_+^*)^3+\frac{\theta}{5}(u_+^*)^5+\cdots=(1-\theta)u_+^*\geq 0.\]
    \new{Setting} $c\new{:=}\frac{\theta}{3}(u_+^*)^2+\frac{\theta}{5}(u_+^*)^4+\cdots\new{>0}$\new{,} we
    \new{obtain}
    \[-\kappa \Delta u_+^*+cu_+^*\geq(1-\theta)u_+^*\geq 0.\]
    By the strong maximum principle\new{,} $u_+^*\equiv 0.$ This \new{contradicts} the assumption that
    $u_+^*$ is a nontrivial minimizer\new{, and hence} $u_+^*>0$ \new{in $\Omega$}.
\end{proof}

Next, we \new{show} that \new{$u_+^*<u_\theta$ in $\Omega$.} \new{Since} $u_+^*=0$ on
$\partial\Omega$\new{,} \new{the function} $u_+^*$ \new{attains its} maximum at some \new{interior
point} $x_0\in\Omega$\new{.} \new{Evaluating \eqref{E-L} at $x_0$, where $\Delta u_+^*(x_0)\leq 0$, gives}
\[\frac{\theta}{2}\ln\left(\frac{1+u_+^*(x_0)}{1-u_+^*(x_0)}\right)
= \kappa \Delta u_+^*(x_0)+u_+^*(x_0)\leq u_+^*(x_0).\]

\new{Suppose for contradiction that} there exists \new{$x_0\in\Omega$} such that
$u_+^*(x_0)=u_\theta$\new{,} \new{and} define $w=u_+^* -u_\theta.$ Then $w(x_0)=0$ and
$-1<-u_\theta\leq w\leq 0.$ \new{The function} $w$ satisfies the \new{equation}
\begin{align*}
-\kappa\Delta w &= -\frac{\theta}{2}\ln\left(\frac{1+w+u_\theta}{1-w-u_\theta}\right)
\new{-(w+u_\theta)} \\
&= -\frac{\theta}{2}\ln\left(\frac{1+u_+^*}{1-u_+^*}\right)+u_+^*\geq 0.
\end{align*}
By \new{the} strong maximum principle\new{,} $w\equiv 0\new{,}$ \new{and} hence
$u_+^*\equiv u_\theta\new{.}$ This \new{contradicts the nontriviality of $u_+^*$, and therefore
$0<u_+^*<u_\theta$ in $\Omega$. Since $u_\theta<1$ by definition, the bound $u_+^*<1$ follows as
well. A symmetric argument applies to $u_-^*$.}

The uniqueness of minimizers follows from Lemma~\new{5.1} in \cite{Dai-Ramadan:minimizers}.

\section{Numerical simulations}

To carry out numerical simulations, we take the same setting as in \cite{DLL}. We take the domain
to be $\Omega :=(0,L)\times(0,L)$ with $L = \sqrt{\new{2}}\pi$\new{,} so that $\lambda_1 = 1$.
The approach is to solve the Allen--Cahn equation
\begin{align}
	u_t & = \kappa \Delta u - W'(u), \quad \mbox{in }\Omega, \label{AC-eq1} \\
	u & = 0 \quad \mbox{on }\partial\Omega, \label{AC-eq2}
\end{align}
whose equilibrium states are critical points of $E$ under the homogeneous Dirichlet condition. The
numerical scheme \new{uses} the standard forward Euler \new{method} in time and \new{a} five-point
central difference \new{approximation} for the Laplace operator. We take random positive initial
values, with the \new{expectation} that the solutions \new{of} \qref{AC-eq1}\new{--}\qref{AC-eq2}
will converge to the non-negative global minimizer $u^*_+$.

The grid size is $h = L/N$ with $N=128$, and the time step is $\Delta t = 1\times 10^{-4}$. We
\new{use} a stopping \new{criterion} similar to that in \cite{DLL}\new{:} \new{we} run the
simulation from $t=0$ to at least $t=50$ or a multiple of \new{50}, and \new{stop} when the
$L^\infty$ norm of the discretization of the right\new{-}hand side of \qref{AC-eq1} is smaller
than $1\times 10^{-7}$, whichever comes later. \new{Once this state is reached, we regard it as a
numerical equilibrium and take it as} an approximation of $u^*_+$.

Since $\lambda_1 =1$, \new{a nontrivial positive minimizer for $E$ exists only when
$\kappa < 1-\theta$.} In addition, the positive minimizer $u_\theta$ of the \new{Flory--Huggins}
potential $W$ \new{depends} on $\theta$. We use Newton's method to numerically find $u_\theta$ and
compare \new{it} with the maximum value of the numerical equilibrium of the Allen--Cahn equation
for $\kappa = 0.02$. See Table~\ref{tab1} and Figure~\ref{fig1}. \new{By
Theorem~\ref{th-main}~(2), the maximum value of the positive minimizer $u^*_+$ is strictly less
than $u_\theta$.} \new{The numerical results in Table~\ref{tab1} are consistent with this
theoretical bound, up to numerical errors.}

\begin{table}[h!]
\begin{center}
\begin{tabular}{cccc}
\toprule
$\theta$ & $u_\theta$ & $\kappa$ & $\max\, u^*_+$ (numerical) \\
\midrule
 0.3  & 0.997414 & 0.02 & 0.997414\\
 0.5  & 0.957504 & 0.02 & 0.957504 \\
 0.7  & 0.828635 & 0.02 & 0.828634 \\
 0.9  & 0.525430 & 0.02 & 0.523093 \\
 0.95 & 0.379485 & 0.02 & 0.356520 \\
\bottomrule
\end{tabular}
\caption{\new{Comparison of the maximum value of the numerical solution $u^*_+$ with $u_\theta$,
for fixed $\kappa = 0.02$ and varying $\theta$. In all cases the numerical maximum is strictly
below $u_\theta$, consistent with Theorem~\ref{th-main}~(2).}}
\label{tab1}
\end{center}
\end{table}

\begin{figure}[h!]
\begin{center}
\includegraphics[width=2.0in]{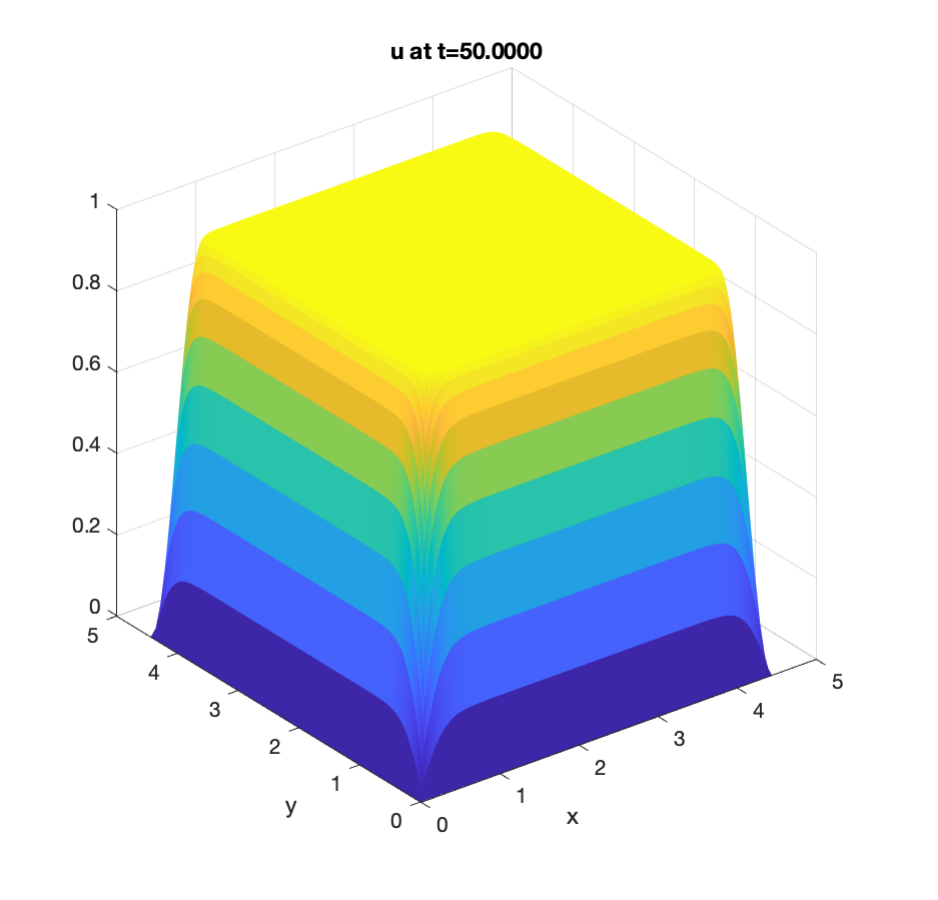}
\includegraphics[width=2.0in]{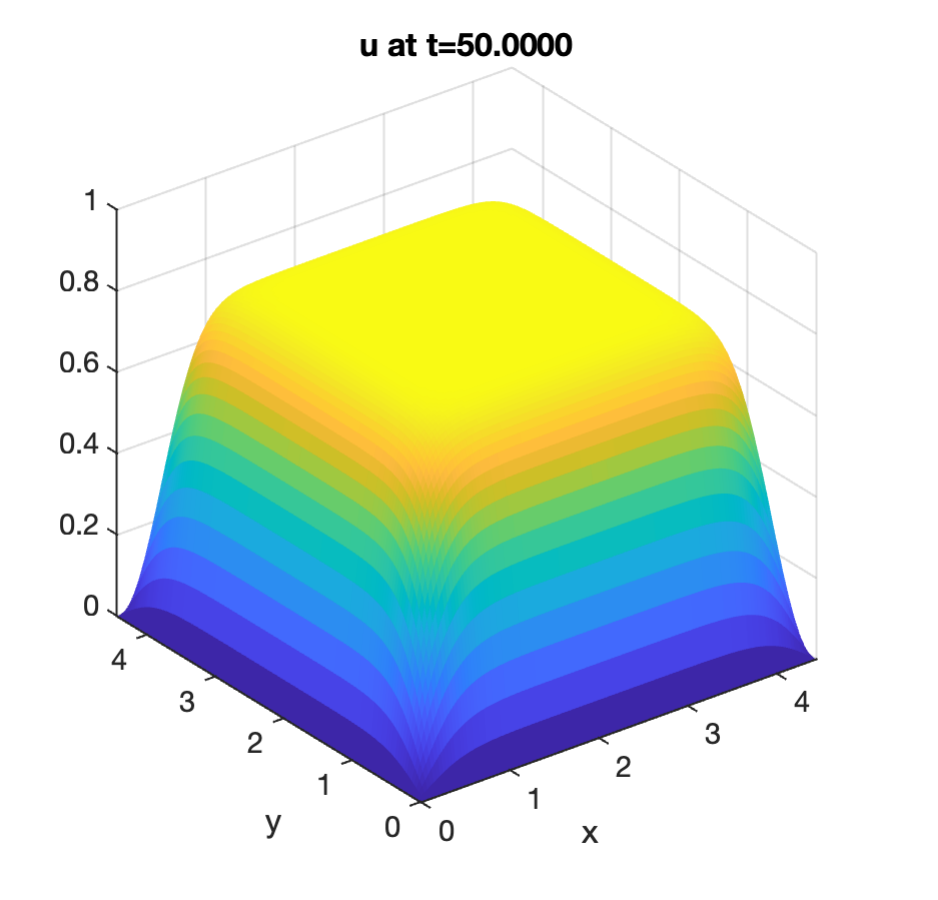}
\includegraphics[width=2.0in]{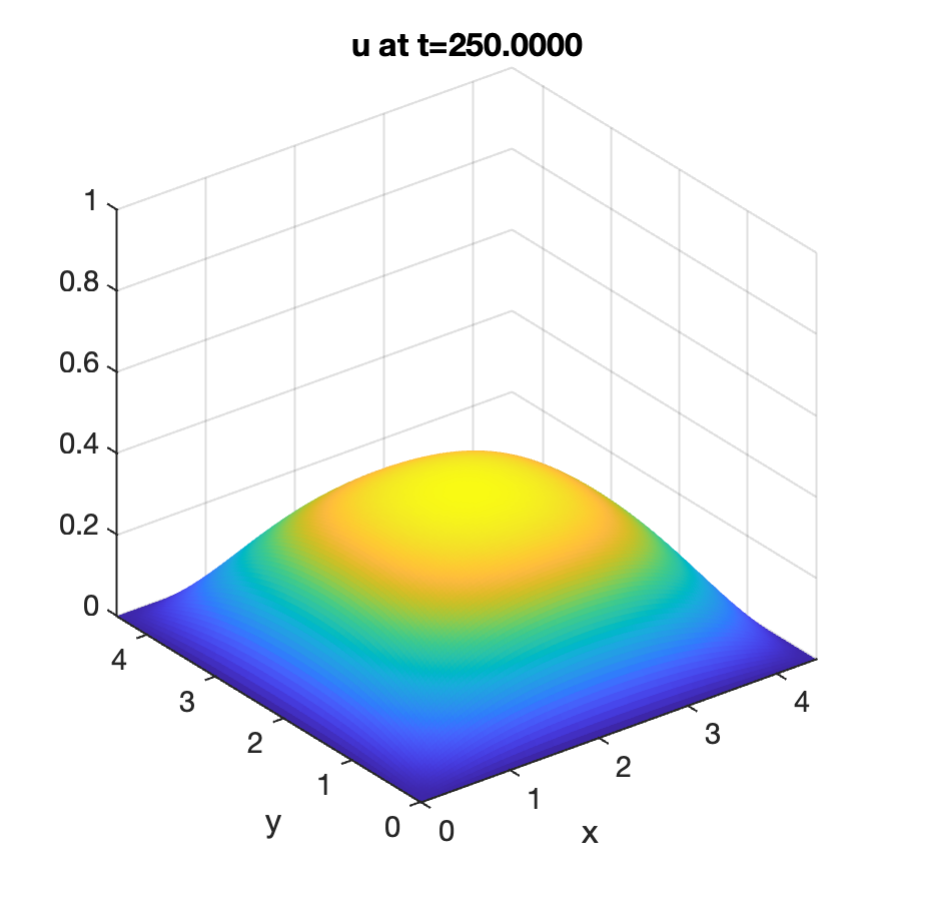}
\end{center}
\caption{\new{Positive minimizers $u^*_+$ for the same value of $\kappa$ ($\kappa = 0.02$) but
different values of $\theta$. Note the difference in the time required for the system to reach
equilibrium.} Left: $\theta = 0.3$, middle: $\theta = 0.7$, right: $\theta= 0.95$.}
\label{fig1}
\end{figure}

In \cite{DLL} it was observed that\new{,} for the quartic potential, \new{smaller values of
$\kappa$ yield larger maximum values of $u^*_+$.} This \new{phenomenon} \new{also} occurs for the
\new{Flory--Huggins} potential\new{, as reported here.} \new{An informal justification is} that
$u^*_+$ should approach the homogeneous state $u\equiv 0$ as $\kappa$ increases \new{toward}
$(1-\theta)/\lambda_1$. Table~\ref{tab2} and Figure~\ref{fig2} show the numerical results for
$\theta=0.7$.

\begin{table}[h!]
\begin{center}
\begin{tabular}{cccc}
\toprule
$\theta$ & $u_\theta$ & $\kappa$ & $\max\, u^*_+$ (numerical) \\
\midrule
 0.7 & 0.828635 & 0.02  & 0.828634\\
 0.7 & 0.828635 & 0.05  & 0.828409 \\
 0.7 & 0.828635 & 0.10  & 0.821620 \\
 0.7 & 0.828635 & 0.15  & 0.791735 \\
 0.7 & 0.828635 & 0.20  & 0.717498 \\
 0.7 & 0.828635 & 0.25  & 0.560631 \\
 0.7 & 0.828635 & 0.28  & 0.375849 \\
 0.7 & 0.828635 & 0.299 & 0.087817 \\
\bottomrule
\end{tabular}
\caption{\new{Positive minimizers $u^*_+$ for fixed $\theta = 0.7$ and varying $\kappa$. Smaller
values of $\kappa$ yield larger maximum values of $u^*_+$, consistent with the theoretical
prediction that $u^*_+ \to 0$ as $\kappa \to (1-\theta)/\lambda_1 = 0.3$.}}
\label{tab2}
\end{center}
\end{table}

\begin{figure}[h!]
\begin{center}
\includegraphics[width=2.0in]{_Result_surf_L=4.4429_N=128_theta=0.7000_kappa=0.0200_t=50.0000_run_1.pdf}
\includegraphics[width=2.0in]{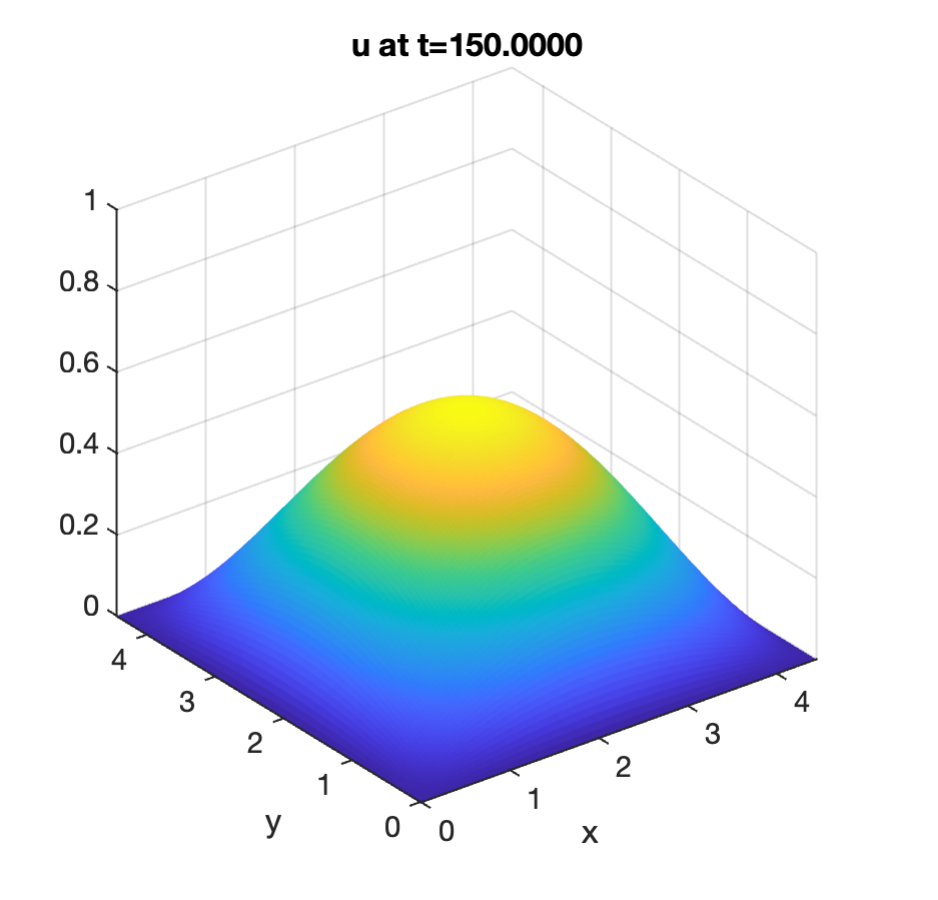}
\includegraphics[width=2.0in]{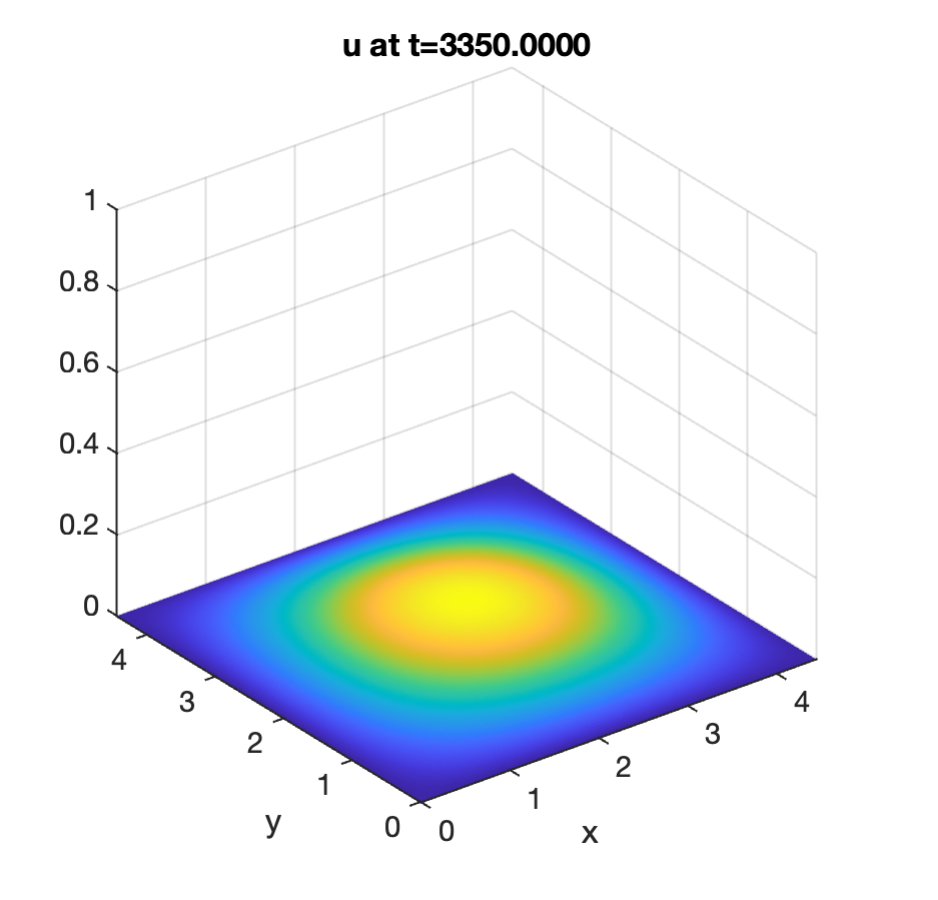}
\end{center}
\caption{\new{Positive minimizers $u^*_+$ for fixed $\theta = 0.7$ and varying $\kappa$. Note the
difference in the time required for the system to reach equilibrium.} Left: $\kappa = 0.02$,
middle: $\kappa = 0.25$, right: $\kappa= 0.299$.}
\label{fig2}
\end{figure}

\section{Discussion}

This work investigates the minimizers of the Cahn--Hilliard energy functional with the
\new{Flory--Huggins} potential, often referred to as the logarithmic potential. The boundary
condition considered here is the Dirichlet boundary condition. The energy functional depends on two
key parameters: $\kappa$, which can be interpreted as the \new{transition layer thickness
parameter}, and $\theta$, which controls the \new{temperature and hence the} strength of the
nonlinearity.

Our analysis reveals a bifurcation phenomenon for the minimizers when the prescribed boundary value
\new{is a homogeneous mixture of the two phases}. Depending on the parameter regime, the minimizer
is either unique or there exist two distinct minimizers. The onset of bifurcation depends on the
interplay between \new{$\kappa$, $\theta$, and} the smallest eigenvalue of the negative Laplacian.

Similar analyses of minimizers have been carried out in previous work. For instance, Dai, Li, and
Luong \cite{DLL} studied the Cahn--Hilliard energy with a quartic potential under the same
anchoring conditions. In that setting, the nonlinearity was smoother and technically easier to
handle. More recently, Dai and Ramadan \cite{Dai-Ramadan:minimizers} considered the
\new{de~Gennes--Cahn--Hilliard} energy, where the challenge arose from the singular nature of the
potential in combination with dispersion\new{; the authors overcame this difficulty through a
suitable nonlinear transformation}.

In the present article, \new{the logarithmic nonlinearity introduces singular behavior that
requires substantially different techniques. At the same time,} the temperature parameter $\theta$
plays a crucial role\new{: depending on its value, the qualitative properties of minimizers vary,
and $\theta$ enters as an additional bifurcation parameter not present in the quartic or
trigonometric settings.} Furthermore, the methods developed here can be extended to other boundary
conditions, such as Neumann or dynamic boundary conditions, or \new{mixed} boundary conditions
where different parts of the boundary satisfy different constraints. Such generalizations may lead
to new and potentially richer properties of the minimizers.

Our theoretical analysis is supported by the numerical simulations presented \new{in Section~6}.
Specifically, we benchmark against the example \new{in} \cite{DLL}\new{,} with the quartic
potential replaced \new{by} the \new{Flory--Huggins} potential. Our numerical findings are
consistent with the observations in \cite{DLL}. \new{The choice of the Flory--Huggins potential
introduces the additional temperature parameter $\theta$, and we report on the impacts of varying
both the transition layer thickness and the temperature on the minimizers.} The \new{singular and
nonlinear} nature of the \new{logarithmic} potential poses \new{interesting} numerical
challenge\new{s}\new{, particularly in the design of robust numerical schemes.} Coupled with the
incorporation of \new{random initial data} and the need \new{for} long\new{-}time simulations
\new{, a careful numerical analysis of this problem remains} an interesting open problem\new{.}

\section*{Conflict of Interest}
The authors declare that they have no conflict of interest.

\section*{Funding}
The work of Shibin Dai was partially supported by the U.S. National Science Foundation through
grants CBET-2212116 and DMS-2345500. The work of Natasha Sharma was supported by the U.S.
National Science Foundation through grant DMS-2110774.

\section*{Data Availability}
The data that support the findings of this study are available from the corresponding author upon
reasonable request.

\bibliographystyle{plain}
\bibliography{mybiblio}

\end{document}